\theoremstyle{theorem}
\newtheorem{theorem}{Theorem}
\newtheorem{lemma}{Lemma}
\newtheorem{corollary}{Corollary}
\newcommand{\Var}{{\rm Var}}
\begin{document}

\vspace*{30px}

\begin{center}\Large
\textbf{Lah numbers and Lindstr\"{o}m's lemma}
\bigskip\large

Ivica Martinjak\\
Faculty of Science, University of Zagreb\\
Zagreb, Croatia\\
{\tt imartinjak@phy.hr} \\
\bigskip
Riste \v Skrekovski\\
Faculty of Information Studies, Novo Mesto, Slovenia \\
FMF, University of Ljubljana, Ljubljana, Slovenia\\
FAMNIT, University of Primorska, Slovenia \\
{\tt skrekovski@gmail.com}


\end{center}


\begin{abstract} 
We provide a combinatorial interpretation of Lah numbers by means of planar networks. Henceforth, as a conesquence of Lindstr\"{o}m's lemma, we conclude that the related Lah matrix possesses a remarkable property of total non-negativity. 
\end{abstract}

\noindent {\bf Keywords:} Lah numbers, totally positive matrices, Lindstr\"{o}m's lemma, planar network \\
\noindent {\bf AMS Mathematical Subject Classifications:} 05A15, 11C20

\section{Introduction}

The Lah numbers were introduced by Ivan Lah in 1952 and since then they are the subject of many prominent researches. For $n, k \in \mathbb{N}_0$ we define $L_{n.k}$ as the number of ways to partition the set $[n] = \{1,2, \ldots, n\}$ into $k$ nonempty tuples (i.e. linearly ordered sets). We let $L_{0,0}:=1$. Define the {\em Lah matrix} ${\rm LM}_m=[L_{i,j}]$ as the matrix of dimension $m\times m$, whose element in the $i$-th row and $j$-th column is $L_{i,j} $. Note that ${\rm LM}_m$ is a low-triangular matrix. For the first column of LM it holds $L_{m,1}=m!$ since all labeled "balls" we have to put into sole "box" - where we distinguish the order of balls meaning that we deal with permutations of $n$. Further consideration of these partitioning shows that the Lah numbers are recursive in nature, and more precisely 
$$
L_{n+1,k} = L_{n, k-1} + (n+k)L_{n,k}.
$$
Other contexts where the Lah numbers appears include {noncrossing partitions}, {Dyck paths}, $q$-analogues as well as falling and rising factorials, just to name a few. In particular, the number of Dyck $n$-paths with $n+1-k$ peaks labeled $1,2,\ldots, n+1-k$ is equal to $L_{n,k}$. Lah numbers were originally introduced as coefficients in the polynomial identity
 \begin{equation*} 
     x(x+1) \cdots (x+n-1) = \sum_{k=0}^n \, L_{n,k}\,  x(x-1) \cdots (x-k+1),
\end{equation*}
where $n,k,x \in \mathbb{N}_0$. An explicit formula is known for Lah numbers,
 \begin{equation} \label{form}
     L_{m,k} =    \binom{m-1}{k-1} \frac{m!}{k!} \,.  
\end{equation}
Some natural generalizations are done by Wagner \cite{Wagn} as well as by Ramirez and Shattuck \cite{RaSh}.

\section{The main result}


A matrix is {\em totally non-negative} (resp.~{\em positive}) if each of its minors is non-negative (resp.~positive) \cite{FoZe}. In a planar acyclic weighted directed graph with $n$ sources $a_i$'s and $n$ sinks $b_i$'s, one defines 
a {\em weight matrix} $W=[w_{i,j}]$ of dimension $n\times n$, where $w_{i,j}$ is the sum of the weights of paths from $a_i$ to $b_j$. Such graphs are also called planar networks.

We let $\Delta_{I,J}(M)$ denotes the minor of a matrix $M$ with the row indices from set $I$ and the column indices from set $J$.

\begin{lemma}[Lindstr\"{o}m's lemma]
A minor $\Delta_{I,J}(W)$ of the weight matrix $W$ of a planar network is equal to the sum of weights of all collections of vertex-disjoint paths that connect the sources labeled by $I$ with the sinks labeled by $J$. 
\end{lemma}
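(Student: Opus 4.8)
The plan is to expand the determinant defining the minor into a signed sum over systems of paths, then to cancel the systems whose paths intersect by means of a sign-reversing involution, and finally to invoke planarity to see that every surviving system is counted with sign $+1$. Concretely, I would fix $I = \{i_1 < \cdots < i_k\}$ and $J = \{j_1 < \cdots < j_k\}$ and write the minor by the Leibniz formula as $\Delta_{I,J}(W) = \sum_{\sigma \in S_k} \operatorname{sgn}(\sigma) \prod_{p=1}^{k} w_{i_p, j_{\sigma(p)}}$. Since each entry $w_{i_p, j_{\sigma(p)}}$ is by definition the sum of the weights of all paths from the source $a_{i_p}$ to the sink $b_{j_{\sigma(p)}}$, distributing the product over these sums expresses $\Delta_{I,J}(W)$ as a sum over all path systems $\mathcal{P} = (P_1, \ldots, P_k)$, where $P_p$ runs from $a_{i_p}$ to $b_{j_{\sigma(p)}}$ for some $\sigma \in S_k$, each weighted by $\operatorname{sgn}(\sigma)$ times $\operatorname{wt}(\mathcal{P}) = \prod_p \operatorname{wt}(P_p)$.

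The heart of the argument is a weight-preserving, sign-reversing involution on the set of path systems that are not vertex-disjoint. Given such a system, I would single out a canonical pair of paths sharing a vertex --- for instance, the smallest index $p$ for which $P_p$ meets some other path, and then the first common vertex $v$ encountered along $P_p$ --- and swap the two tails of these paths beyond $v$. This exchange leaves the underlying multiset of edges, and hence the total weight, unchanged, but it composes $\sigma$ with a transposition and therefore flips its sign; applying the rule a second time recovers the original system, so the map is a fixed-point-free involution on intersecting systems. All such systems thus cancel in pairs, and $\Delta_{I,J}(W)$ reduces to the signed sum over vertex-disjoint systems alone.

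It remains to check that every vertex-disjoint system contributes with sign $+1$, and this is exactly where planarity enters. Because the sources and the sinks lie on the outer boundary of the planar network in the prescribed linear order, two vertex-disjoint paths cannot interchange the relative order of their endpoints without crossing; a topological argument then shows that the only permutation $\sigma$ admitting a vertex-disjoint path system is the identity, whose sign is $+1$. Hence $\Delta_{I,J}(W)$ equals the unsigned sum of the weights of all vertex-disjoint collections of paths from the sources $I$ to the sinks $J$, as claimed. I expect the main obstacle to lie precisely in this last step --- formalizing what planarity with an ordered boundary means and proving rigorously that any order-reversing connection forces two paths to meet --- rather than in the involution, which is essentially bookkeeping once the canonical intersection has been chosen.
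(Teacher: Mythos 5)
The paper does not prove this statement at all: it is quoted as a known result (Lindstr\"{o}m's lemma), with \cite{FoZe} as the background reference, and is used purely as a black box to deduce that ${\rm LM}_m$ is totally non-negative. So there is no proof in the paper to compare against; what you have written is the standard Lindstr\"{o}m--Gessel--Viennot argument, and its outline is correct: Leibniz expansion of the minor, a weight-preserving sign-reversing tail-swap involution on intersecting path systems, and a topological argument showing that, for vertex-disjoint systems, planarity forces the identity permutation. Supplying this would make the paper self-contained where it currently only cites the literature.

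Two points in your sketch need tightening before it is a complete proof. First, the involution: fixing ``the smallest index $p$ whose path meets another, and the first shared vertex $v$ along $P_p$'' does not yet determine a canonical pair, since several paths may pass through $v$; you must also fix, say, the smallest index $q \neq p$ with $v$ on $P_q$, and then actually verify that the triple $(p, v, q)$ is recomputed identically after the swap (this uses that the swap preserves the union of the vertex sets of $P_p$ and $P_q$, leaves all other paths untouched, and that directed paths in an acyclic network are simple). This stability check is the genuinely delicate part; asserting that ``applying the rule a second time recovers the original system'' is precisely what has to be proved, and it is more than bookkeeping. Second, the final step is false for an arbitrary planar acyclic digraph with designated sources and sinks: you need the hypothesis, implicit in the paper's notion of planar network and visibly satisfied by the network $N_n$ of Figure~1, that all sources and sinks lie on the outer face in compatible linear order. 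With that hypothesis your Jordan-curve argument works, since in a plane embedding two paths whose endpoint pairs interleave along the boundary must share a vertex (edges may meet only at vertices). With these two repairs your argument is a correct and complete proof of the lemma.
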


We define a planar network $N_n$  by the figure below (Figure \ref{Fig1}). Note that with the same network but with unit weights, we obtain the "Pascal triangle" as the related weight matrix.

\begin{theorem}
For $m, k \leq n$, the Lah number $L_{m,k}$ corresponds to the number of weighted paths in the network $N_n$  from vertex $a_m$ to the vertex $b_k$.  
\end{theorem}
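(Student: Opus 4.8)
The plan is to read the combinatorial data of the network $N_n$ off Figure \ref{Fig1} and then to show, by induction, that the weighted path count reproduces the defining recurrence of the Lah numbers. Concretely, I expect the internal vertices of $N_n$ to be arranged in a triangular grid whose nodes are indexed by pairs $(m,k)$ with $1\le k\le m\le n$, where the source $a_m$ feeds the $m$-th row and the sink $b_k$ collects the $k$-th column. The two families of edges should mirror the two summands of the recurrence $L_{m,k}=L_{m-1,k-1}+(m+k-1)L_{m-1,k}$, which is just $L_{n+1,k}=L_{n,k-1}+(n+k)L_{n,k}$ after setting $n=m-1$: a weight-$1$ diagonal edge that advances both coordinates, realizing the term $L_{m-1,k-1}$, and a horizontal edge of weight $m+k-1$ that advances only the row index, realizing the term $(m+k-1)L_{m-1,k}$. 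The remark that unit weights turn $N_n$ into the Pascal triangle is the sanity check: replacing every weight by $1$ collapses the recurrence to $\binom{m}{k}=\binom{m-1}{k-1}+\binom{m-1}{k}$.

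Let $w_{m,k}$ denote the sum of the weights of all directed paths in $N_n$ from $a_m$ to $b_k$; this is exactly the $(m,k)$ entry of the weight matrix $W$. First I would settle the boundary data: $w_{m,k}=0$ whenever $k>m$ (no such path exists, matching $L_{m,k}=0$), $w_{m,m}=1$ (a unique all-diagonal path, matching $L_{m,m}=1$), and $w_{m,1}=m!$ (matching $L_{m,1}=m!$), the last of which follows by telescoping the horizontal weights $2,3,\dots,m$ along the bottom edges. These anchor the induction.

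The inductive step is a last-edge decomposition. Every path from $a_m$ to $b_k$ arrives at the terminal node $(m,k)$ through exactly one of its two incoming edges. Removing a weight-$1$ diagonal edge leaves a path from $a_{m-1}$ to $b_{k-1}$, and these contribute total weight $w_{m-1,k-1}$; removing the weight-$(m+k-1)$ horizontal edge leaves a path from $a_{m-1}$ to $b_k$, and these contribute total weight $(m+k-1)\,w_{m-1,k}$. Hence $w_{m,k}=w_{m-1,k-1}+(m+k-1)\,w_{m-1,k}$, which is precisely the Lah recurrence; together with the matching boundary values and induction on $m$, this yields $w_{m,k}=L_{m,k}$ for all $m,k\le n$, as claimed.

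I expect the only real obstacle to be bookkeeping rather than depth: one must pin down the exact labelling convention used in Figure \ref{Fig1} so that the geometric advance-the-row and advance-the-column edges carry the weights $m+k-1$ and $1$ with the correct offsets, and so that the sources and sinks are attached to the right nodes. Once the figure's labelling is fixed, the last-edge decomposition and the induction are routine. As an alternative that avoids the recurrence entirely, one could count directly: each $a_m\to b_k$ path corresponds to a choice of where the $k-1$ diagonal steps occur among the rows, together with the product of the intervening horizontal weights, and summing these products should reproduce the closed form $\binom{m-1}{k-1}\tfrac{m!}{k!}$ of \eqref{form}. Matching the recurrence is cleaner, however, and is the route the Pascal-triangle remark points toward.
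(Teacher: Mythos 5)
Your argument is internally coherent, but it is about the wrong network: the structure you guessed for $N_n$ does not match Figure~\ref{Fig1}, and the discrepancy is fatal to the proof rather than a matter of offsets or labelling conventions. In the paper's network the \emph{horizontal} edges all have weight $1$, while the \emph{diagonal} edges carry the nontrivial weights, and those weights depend only on the level: every diagonal edge descending from the row of $a_{j+1}$ to the row of $a_j$ has weight $j+1$. In particular, no edge of $N_n$ carries a weight of the form $m+k-1$, i.e.\ one mixing a row index with a column index. Your boundary checks betray the same inversion: in $N_n$ the unique $a_m\to b_m$ path is the all-horizontal one (it has weight $1$ because horizontals are unweighted), and $w_{m,1}=m!$ arises by telescoping the \emph{diagonal} weights $m,m-1,\ldots,2$ as the path descends to the bottom row, not horizontal weights along it.

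Because of this, your key inductive step cannot be carried out on the actual $N_n$. The two edges entering the sink $b_k$ are a horizontal edge of weight $1$ and a diagonal edge of weight $k+1$; deleting either one leaves a path from $a_m$ to an \emph{internal} vertex of the network --- not a path from $a_{m-1}$, nor a path to $b_{k-1}$ --- so the last-edge decomposition does not yield $w_{m,k}=w_{m-1,k-1}+(m+k-1)\,w_{m-1,k}$, and there is no edge anywhere whose weight could supply the factor $m+k-1$. The mechanism that actually makes $N_n$ work, and the paper's proof, is the one your ``alternative'' route gestures at but also mis-describes: since the diagonal weights depend only on the level, \emph{every} path from $a_m$ to $b_k$ is forced to pick up exactly the diagonal weights $m,m-1,\ldots,k+1$ and otherwise uses weight-$1$ horizontals, so every path has the same weight $\frac{m!}{k!}$ (there is no nontrivial sum of products to evaluate); the paths run through a rectangular grid of size $(m-k)\times(k-1)$, so there are exactly $\binom{m-1}{k-1}$ of them, and the total weight $\binom{m-1}{k-1}\frac{m!}{k!}$ equals $L_{m,k}$ by the closed formula \eqref{form} --- no recurrence, no induction. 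Your construction, with diagonals of weight $1$ and horizontals of weight $m+k-1$, does define a perfectly good planar network whose weight matrix is the Lah matrix, and your induction proves that statement correctly; but it is a statement about a different network, not about the $N_n$ defined by Figure~\ref{Fig1}, which is what the theorem asserts.
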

\begin{proof}
This obviously holds for $m<k$ so assume $m \ge k$. Notice that every directed path from $a_m$ to $b_k$ pass through the rectangular grid, which is of size $(m-k)\times (k-1)$  (e.g. for $a_5$ and $b_3$ it is marked in the figure). Thus the number 
of these paths is $$\binom{m-k+k-1}{k-1} = \binom{m-1}{k-1}.$$ Every such path is of length $m-1$ consisting of $k-1$ "horizontal" edges
and $m-k$ "diagonal" edges. Horizontal edges are all of weight 1 and regarding the diagonal edges, when moving from $a_m$ to $b_k$, they have weights $$m, m-1, \ldots, k+1,$$ respectively. So, each such path has weight $\frac{m!}{k!}$. This gives us that total weight of the paths from $a_m$ to $b_k$ is $$\binom{m-1}{k-1} \frac{m!}{k!},$$ which is the Lah number $L_{m,k}$ by (\ref{form}). 
\begin{figure}[h]
\setlength{\unitlength}{0.65cm}

\begin{picture}(0,8.1)(-8.9,-1)

\put(-1,0){\line(1,0){6}}
\put(-1,1){\line(1,0){6}}
\put(-1,2){\line(1,0){6}}
\put(-1,3){\line(1,0){6}}
\put(-1,4){\line(1,0){6}}
\put(-1,5){\line(1,0){6}}
\put(-1,6){\line(1,0){6}}

\put(5,0){\line(-1,1){6}}
\put(5,1){\line(-1,1){5}}
\put(5,2){\line(-1,1){4}}
\put(5,3){\line(-1,1){3}}
\put(5,4){\line(-1,1){2}}
\put(5,5){\line(-1,1){1}}

\put(-2.1, 0){ $a_1$ }
\put(-2.1, 1){ $a_2$ }
\put(-2.1, 2){ $a_3$ }
\put(-2.1, 3){ $a_4$ }
\put(-2.1, 4){ $a_5$ }
\put(-2.1, 6){ $a_n$ }
\put(-2.1, 5.3){ $.$ }
\put(-2.1, 5,0){ $.$ }
\put(-2.1, 4,7){ $.$ }

\put(5.2, 0){ $b_1$ }
\put(5.2, 1){ $b_2$ }
\put(5.2, 2){ $b_3$ }
\put(5.2, 3){ $b_4$ }
\put(5.2, 4){ $b_5$ }
\put(5.2, 6){ $b_n$ }
\put(5.2, 5,3){ $.$ }
\put(5.2, 5,0){ $.$ }
\put(5.2, 4,7){ $.$ }

\put(1,4.04){\line(1,0){2}}
\put(1,4.02){\line(1,0){2}}

\put(3,2.04){\line(1,0){2}}
\put(3,2.02){\line(1,0){2}}

\put(1,4.02){\line(1,-1){2}}
\put(1,4.04){\line(1,-1){2}}

\put(3,4.02){\line(1,-1){2}}
\put(3,4.04){\line(1,-1){2}}

\put(-1,0){\circle*{0.15}}
\put(5,0){\circle*{0.15}}

\put(-1,1){\circle*{0.15}}
\put(4,1){\circle*{0.15}}
\put(5,1){\circle*{0.15}}

\put(-1,2){\circle*{0.15}}
\put(3,2){\circle*{0.15}}
\put(4,2){\circle*{0.15}}
\put(5,2){\circle*{0.15}}

\put(-1,3){\circle*{0.15}}
\put(2,3){\circle*{0.15}}
\put(3,3){\circle*{0.15}}
\put(4,3){\circle*{0.15}}
\put(5,3){\circle*{0.15}}

\put(-1,4){\circle*{0.15}}
\put(1,4){\circle*{0.15}}
\put(2,4){\circle*{0.15}}
\put(3,4){\circle*{0.15}}
\put(4,4){\circle*{0.15}}
\put(5,4){\circle*{0.15}}

\put(0,5){\circle*{0.15}}
\put(1,5){\circle*{0.15}}
\put(2,5){\circle*{0.15}}
\put(3,5){\circle*{0.15}}
\put(4,5){\circle*{0.15}}
\put(5,5){\circle*{0.15}}

\put(-1,5){\circle*{0.15}}
\put(-1,6){\circle*{0.15}}
\put(-0,6){\circle*{0.15}}
\put(1,6){\circle*{0.15}}
\put(2,6){\circle*{0.15}}
\put(3,6){\circle*{0.15}}
\put(4,6){\circle*{0.15}}
\put(5,6){\circle*{0.15}}

\put(-0.4, 5.3){{ \scriptsize $n$ }}
\put(0.6, 5.3){{ \scriptsize $n$ }}
\put(1.6, 5.3){{ \scriptsize $n$ }}
\put(2.6, 5.3){{ \scriptsize $n$ }}
\put(3.6, 5.3){{ \scriptsize $n$ }}
\put(4.6, 5.3){{ \scriptsize $n$ }}

\put(1.6, 3.3){{ \scriptsize $5$ }}
\put(2.6, 3.3){{ \scriptsize $5$ }}
\put(3.6, 3.3){{ \scriptsize $5$ }}
\put(4.6, 3.3){{ \scriptsize $5$ }}

\put(2.6, 2.3){{ \scriptsize $4$ }}
\put(3.6, 2.3){{ \scriptsize $4$ }}
\put(4.6, 2.3){{ \scriptsize $4$ }}

\put(3.6, 1.3){{ \scriptsize $3$ }}
\put(4.6, 1.3){{ \scriptsize $3$ }}

\put(4.6, 0.3){{ \scriptsize $2$ }}

\end{picture}
\caption{The Lah numbers count weighted paths in a directed acyclic planar graph.} \label{Fig1}
\end{figure}
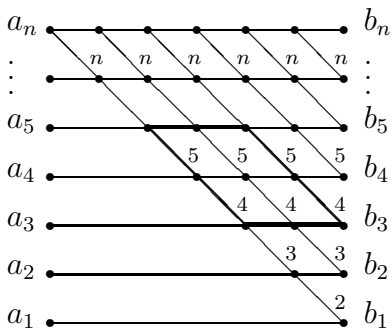
\end{proof}

As an easy consequence from Lindstr\"{o}m's lemma we obtain the following.
\begin{corollary}
The Lah triangular matrix  ${\rm LM}_m$ is totally non-negative.
\end{corollary}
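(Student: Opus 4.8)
The plan is to read the statement off directly from the Theorem together with Lindström's lemma. First I would observe that, by the Theorem, the entry $w_{i,j}$ of the weight matrix $W$ of the network $N_n$ equals the Lah number $L_{i,j}$ for all $i,j \le n$; that is, $W = {\rm LM}_n$. Taking $n = m$ (or simply restricting to the first $m$ rows and columns) identifies ${\rm LM}_m$ as the weight matrix of a planar network, which is exactly the setting in which Lindström's lemma applies. Every minor of ${\rm LM}_m$ is then a minor of a weight matrix, and the whole corollary becomes a matter of examining what Lindström's lemma says about such minors.

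The core of the argument is a positivity check. By Lindström's lemma, every minor $\Delta_{I,J}({\rm LM}_m) = \Delta_{I,J}(W)$ equals the sum, over all collections of vertex-disjoint paths joining the sources indexed by $I$ to the sinks indexed by $J$, of the weight of that collection, where the weight of a collection is the product of the weights of all its edges. In the network $N_n$ every edge weight is a positive integer: the horizontal edges carry weight $1$ and the diagonal edges carry weights drawn from $\{2,3,\dots,n\}$, as read off in the proof of the Theorem. Hence each individual path weight is non-negative, each collection weight (a product of such) is non-negative, and therefore each minor, being a sum of non-negative terms, is non-negative. By the definition of total non-negativity, this is precisely what we must show, so the corollary follows.

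The one point requiring care — the step I would flag as the main obstacle — is the justification that Lindström's lemma may be invoked with \emph{no} cancelling signs. In the general Gessel--Viennot framework, a minor is a \emph{signed} sum over families of non-intersecting paths, the sign being that of the permutation matching $I$ to $J$; it is the planarity of $N_n$, together with the monotone left-to-right ordering of the sources $a_i$ and the sinks $b_j$, that forces every vertex-disjoint family to realize the identity permutation, so that all signs are $+1$. This is already the content of the lemma exactly as stated in the excerpt, so once the weight matrix has been identified with ${\rm LM}_m$ no further combinatorial work is needed; the remaining verification that all edge weights are non-negative is immediate from the Theorem's description of them. I would therefore keep the proof short, citing the Theorem for the identification $W = {\rm LM}_m$ and Lindström's lemma for the non-negativity of the minors, and spend at most one sentence remarking on the vanishing of the signs.
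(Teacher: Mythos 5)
Your proof is correct and follows exactly the route the paper intends: the Theorem identifies ${\rm LM}_m$ as the weight matrix of the planar network $N_n$, and Lindstr\"{o}m's lemma then expresses every minor as a sum of weights of vertex-disjoint path families, each non-negative since all edge weights are positive. The paper states the corollary as an immediate consequence without spelling this out, so your write-up (including the remark on why no signs appear) is just a more explicit version of the same argument.
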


Totally positive matrices and in particular its eigenvalues are related with the {\em variation-decreasing} vectors. Let $u=(u_1, u_2, \ldots, u_n)$ be a vector in $\mathbb{R}^n$. A {\it sign change} in $u$ is a pair of indices ($i$,$j$) such that for $i < j \leq n$: 
\begin{description}
\item $i)$ $u_k=0$ for all $k$ (if there are any), $i<k < j$, and  
\item $ii)$  $u_iu_j < 0$.
\end{description}
The {\em weak variation} $\Var^{-}(u)$ is the number of signs changes in $u$. For example, $\Var^{-} (2,-2,0,1,-3,0,0,1)=4$. Now, an $n \times m$ matrix $M$ with real entries is {\em variation-decreasing} if for all nonzero vectors $x \in \mathbb{R}^m$
\begin{equation}  \label{vardecr}
\Var^{-}(Mx) \leq \Var^{-}(x).
\end{equation} 
We point out Motzkin's theorem that relates the notion of variation-decreasing matrices with total positivity (see J. Kung, G. Rota and C. Yan \cite{KRY}). 
\begin{theorem} [Motzkin]
A totally non-negative matrix is variation-decreasing.
\end{theorem}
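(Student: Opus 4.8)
The plan is to prove directly that $\Var^-(Mx)\le\Var^-(x)$ for every nonzero $x$, the case $Mx=0$ being trivial since a zero vector has no sign changes. The engine will be Lindstr\"om's lemma only through its standard corollary that products and submatrices of totally non-negative matrices are again totally non-negative (Cauchy--Binet), together with a single cofactor expansion. I would organize the argument around a column reduction followed by an induction.

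First I would perform the column reduction. Writing $r:=\Var^-(x)$, the nonzero entries of $x$ fall into $r+1$ maximal blocks of constant sign on consecutive coordinates, with signs alternating from block to block; call the blocks $J_0<\cdots<J_r$ and the signs $\epsilon_0,\dots,\epsilon_r\in\{+1,-1\}$. Let $D$ be the $m\times(r+1)$ matrix with $D_{j,s}=|x_j|$ for $j\in J_s$ and $0$ otherwise. Because the supports of its columns are disjoint increasing intervals, every square submatrix of $D$ has at most one nonzero entry per row, so each of its minors is either $0$ or the single product corresponding to the order-preserving matching; hence $D$ is totally non-negative, and by construction $x=D\epsilon$ with $\epsilon=(\epsilon_0,\dots,\epsilon_r)$. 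Therefore $Mx=(MD)\epsilon=C\epsilon$, where $C:=MD$ is totally non-negative by Cauchy--Binet. This reduces the theorem to the following core claim: \emph{if $C$ is totally non-negative with $\ell$ columns, then $\Var^-(Cv)\le\ell-1$ for every vector $v$}; applying it to $C$, which has $\ell=r+1$ columns, yields $\Var^-(Mx)\le r=\Var^-(x)$.

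I would prove the core claim by induction on the number of columns $\ell$. The base case $\ell=1$ is immediate, since $Cv$ is then a scalar multiple of a single nonnegative column and has no sign change. For the inductive step, suppose $\Var^-(Cv)\ge\ell$ for some $v$; then I can select $\ell+1$ rows on which $Cv$ has strictly alternating signs, yielding an $(\ell+1)\times\ell$ totally non-negative submatrix $C'$ with $C'v$ strictly alternating. The key computation is to expand the determinant of the singular augmented matrix $[\,C'\mid C'v\,]$ along its last column: since the entries of $C'v$ strictly alternate while every maximal minor of $C'$ is nonnegative, all terms of the expansion share a single sign, so their vanishing sum forces every maximal minor of $C'$ to equal $0$. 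Thus $\operatorname{rank}C'\le\ell-1$, a column of $C'$ is redundant, and $C'v=C''w$ for a totally non-negative $C''$ with $\ell-1$ columns; the induction hypothesis then gives $\Var^-(C'v)\le\ell-2$, contradicting the strict alternation. (A rank-deficient $C$ at the outset is handled the same way, by discarding a dependent column and invoking the hypothesis with fewer columns.)

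The hard part is exactly this core claim, and within it the cofactor-expansion step: converting ``the augmented determinant vanishes'' into ``all maximal minors vanish'' is where total non-negativity must interact with the strict sign alternation of $C'v$, and it must be arranged so that the resulting drop in rank feeds the induction on the number of columns. The remaining points are routine: verifying that the block matrix $D$ is totally non-negative and citing Cauchy--Binet so that $MD$ and the column restrictions $C',C''$ remain totally non-negative. I would also remark that Motzkin's theorem can alternatively be deduced from the strictly totally positive case by a limiting argument, but the determinantal induction above is self-contained and avoids any perturbation.
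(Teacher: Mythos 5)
The paper does not actually prove this statement: Motzkin's theorem is quoted as a known result with a pointer to the book of Kung, Rota and Yan \cite{KRY}, and the paper merely applies it to ${\rm LM}_m$. So there is no internal proof to compare against; the only question is whether your self-contained argument is sound, and it is. The factorization $x=D\epsilon$ is legitimate, and $D$ is totally non-negative for the reason you give: in any square submatrix of $D$ each row has at most one nonzero entry, and because the sign blocks are disjoint and ordered, the only permutation that can contribute a nonzero term to the determinant is the order-preserving one, so every minor is $0$ or a product of non-negative numbers. Cauchy--Binet then makes $C=MD$ totally non-negative, and your core claim (for totally non-negative $C$ with $\ell$ columns, $\Var^{-}(Cv)\leq \ell-1$) is correctly proved by induction: expanding the singular determinant of $[\,C'\mid C'v\,]$ along the last column, the strict alternation of $(C'v)_s$ exactly cancels the alternating cofactor signs, so all terms share one sign and every maximal minor of $C'$ must vanish; hence the columns of $C'$ are dependent, $C'v=C''w$ with $C''$ totally non-negative with $\ell-1$ columns, and the induction hypothesis contradicts the $\ell$ sign changes of $C'v$. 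Two small remarks. First, your blocks $J_s$ need not be intervals of consecutive coordinates (zeros of $x$ may be interleaved inside a block), but your argument only uses that the blocks are disjoint and that $\max J_s<\min J_{s+1}$, which does hold. Second, the parenthetical about a rank-deficient $C$ ``at the outset'' is superfluous: your contradiction argument never assumes $C$ has full rank. What you have reconstructed is essentially the classical Schoenberg--Motzkin variation-diminishing argument found in the references the paper defers to, so your proposal genuinely fills the gap the paper leaves to the literature rather than paralleling anything in it.
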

Apparently, once having known that the Lah triangular matrix ${\rm LM}_m$ is totally non-negative, we have that ${\rm LM}_m$ satisfies property (\ref{vardecr}).

\begin{corollary}
The Lah triangular matrix  ${\rm LM}_m$ is variation-decreasing.
\end{corollary}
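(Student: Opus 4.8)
The plan is to obtain the statement as an immediate corollary of the two preceding results, so that no new machinery is required. First I would recall the previous corollary, which asserts that ${\rm LM}_m$ is totally non-negative; this is itself a consequence of Lindstr\"{o}m's lemma applied to the planar network $N_n$, together with the identification of $L_{m,k}$ with the weighted path count from $a_m$ to $b_k$ established in the Theorem. Having total non-negativity in hand, the hypothesis of Motzkin's theorem is met.

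Second, I would invoke Motzkin's theorem exactly as stated: every totally non-negative matrix is variation-decreasing. Applying it with $M = {\rm LM}_m$ (an $m\times m$, hence $n=m$, matrix) yields $\Var^{-}({\rm LM}_m\,x) \leq \Var^{-}(x)$ for every nonzero $x \in \mathbb{R}^m$, which is precisely property (\ref{vardecr}). This completes the argument in a single line of inference.

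There is no genuine obstacle here, since both ingredients are already available; the only thing to be careful about is matching the definitions correctly. The relevant variation is the \emph{weak} variation $\Var^{-}$, and Motzkin's theorem delivers only the non-strict inequality in (\ref{vardecr}) --- which is consistent with ${\rm LM}_m$ being totally non-negative rather than totally positive, as one would need strict total positivity to strengthen the conclusion. I would also note that the lower-triangular shape of ${\rm LM}_m$ causes no difficulty: total non-negativity already accounts for the vanishing upper minors, and the square case $n=m$ fits the definition of a variation-decreasing matrix verbatim.
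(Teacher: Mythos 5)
Your proposal is correct and follows exactly the paper's route: the paper also derives this corollary in one line by combining the preceding corollary (total non-negativity of ${\rm LM}_m$, via Lindstr\"{o}m's lemma) with Motzkin's theorem. Nothing further is needed.
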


\end{document}